\numberwithin{equation}{section}
\title{Preconditioning rectangular spectral collocation%\thanks{Version date: 2015.09.25}
} 
\author{Kui Du\thanks{%Corresponding author. 
School of Mathematical Sciences and Fujian Provincial Key Laboratory of Mathematical Modeling and High-Performance Scientific Computation, Xiamen University, Xiamen 361005, China ({kuidu@xmu.edu.cn}). The research of this author was supported by the National Natural Science Foundation of China (No.11201392 and No.91430213), the Doctoral Fund of Ministry of Education of China (No.20120121120020), and the Fundamental Research Funds for the Central Universities (No.2013121003).} 
%\and Weiwei Sun \thanks{Department of Mathematics, City University of Hong Kong,  Kowloon, Hong Kong ({maweiw@math.cityu.edu.hk}). The research of this author was supported in part by a grant from the Research Grants Council of the Hong Kong Special Administrative Region, China (Project No. CityU 11302514).}
}
\begin{document}
\maketitle
\slugger{mms}{xxxx}{xx}{x}{x--x}%slugger should be set to mms, siap, sicomp, sicon, sidma, sima, simax, sinum, siopt, sisc, or sirev

\begin{abstract}  
Rectangular spectral collocation (RSC) methods have recently been proposed to solve linear and nonlinear differential equations with general boundary conditions and/or other constraints. The involved linear systems in RSC become extremely ill-conditioned as the number of collocation points increase. By introducing suitable Birkhoff-type interpolation problems, we present pseudospectral integration preconditioning matrices for the ill-conditioned linear systems in RSC. The condition numbers of the preconditioned linear systems are independent of the number of collocation points. Numerical examples are given.
\end{abstract}

\begin{keywords} 
Lagrange interpolation, Birkhoff-type interpolation, rectangular spectral collocation, integration preconditioning 
\end{keywords}

\begin{AMS} 65L60, 41A05, 41A10
\end{AMS}

\pagestyle{myheadings}
\thispagestyle{plain}
\markboth{Kui Du}{Preconditioning rectangular spectral collocation}

\section{Introduction}  
Rectangular spectral collocation methods \cite{driscoll2015recta} have recently been demonstrated to be a convenient means of solving the problems when the row replacement or `boundary bordering' strategy of standard spectral collocation methods \cite{fornberg1996pract,trefethen2000spect,boyd2001cheby,canuto2006spect,shen2011spect} becomes ambiguous. Specifically, an $m$th-order differential operator is discretized by a rectangular matrix directly, allowing $m$ constraints to be appended to form an invertible square system.   
However, the involved linear systems become extremely ill-conditioned as the number of collocation points increases. Typically, the condition number grows like $N^{2m}$. Efficient preconditioners are highly required when solving the linear systems by an iterative method. 

Recently, Wang, Samson, and Zhao \cite{wang2014well} proposed a well-conditioned collocation method to solve linear differential equations with various types of boundary conditions. By introducing a suitable Birkhoff interpolation problem \cite{shi2003theor}, they constructed a pseudospectral integration preconditioning matrix, which is the exact inverse of the pseudospectral discretization matrix of the $m$th-order derivative operator together with $m$ boundary conditions. In this paper, we employ the similar idea to construct a pseudospectral integration matrix, which is the exact inverse of the discretization matrix arising in the rectangular spectral collocation method for $m$th-order derivative operator together with $m$ general linear constraints. The condition number of the resulting linear system is independent of the number of collocation points when the new pseudospectral integration matrix is used as a right preconditioner for an $m$th-order linear differential operator together with the same constraints.   

The rest of the paper is organized as follows. In \S 2, we review several topics required in the following sections. In \S 3, we introduce the new pseudospectral integration matrix by a suitable Birkhoff-type interpolation problem. In \S 4, we present the preconditioning rectangular spectral collocation method. Numerical examples are reported in \S 5. We present brief concluding remarks in \S 6.

\section{Preliminaries} 
\subsection{Barycentric resampling matrix} Let $\{x_j\}_{j=0}^N$ be a set of distinct interpolation points satisfying \beq\label{iptsx} -1\leq x_0<x_1<\cdots <x_{N-1}<x_N\leq 1.\eeq
The associated barycentric weights are defined by \beq\label{bweight}w_{j,N}=\prod_{n=0,n\neq j}^N(x_j-x_n)^{-1}, \qquad j=0,1,\ldots,N.\eeq 
Let $\{y_j\}_{j=0}^M$ be another set of distinct interpolation points satisfying \beq\label{iptsy}-1\leq y_0< y_1< \cdots <y_{M-1}<y_M\leq 1.\eeq The barycentric resampling matrix \cite{driscoll2015recta}, ${\bf P}^{{\bf x}\mapsto {\bf y}}\in{\mbbr^{(M+1)\times(N+1)}}$, which interpolates between the points $\{x_j\}_{j=0}^N$ and $\{y_j\}_{j=0}^M$, is defined by \beqs{\bf P}^{{\bf x}\mapsto {\bf y}}=[p^{{\bf x}\mapsto {\bf y}}_{ij}]_{i=0,j=0}^{M,N},\eeqs where \beqs p^{{\bf x}\mapsto {\bf y}}_{ij}=\l\{\begin{array}{ll} \dsp\frac{w_{j,N}}{y_i-x_j}\l(\sum_{l=0}^N\dsp\frac{w_{l,N}}{y_i-x_l}\r)^{-1}, & y_i\neq x_j, \\ 1, & y_i=x_j.
\end{array} \r.\eeqs %The matrix ${\bf P}^{N,m}$ is called a {\it downsampling}  ({\it upsampling}) matrix if $m>0$ $(m<0)$.
\begin{lemma}\label{mnnm}If $N\geq M$, then ${\bf P}^{{\bf x}\mapsto {\bf y}}{\bf P}^{{\bf y}\mapsto {\bf x}}={\bf I}_{M+1}.$ \end{lemma}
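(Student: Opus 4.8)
The plan is to read off what the two resampling matrices do at the level of polynomials. The matrix ${\bf P}^{{\bf x}\mapsto{\bf y}}$ sends the vector of values, at the nodes $\{x_j\}_{j=0}^N$, of any polynomial of degree $\le N$ to its values at the nodes $\{y_i\}_{i=0}^M$ — this is precisely what barycentric Lagrange interpolation computes — and likewise ${\bf P}^{{\bf y}\mapsto{\bf x}}$ sends the values at $\{y_i\}_{i=0}^M$ of any polynomial of degree $\le M$ to its values at $\{x_j\}_{j=0}^N$. Given ${\bf g}\in\mbbr^{M+1}$, let $q$ be the polynomial of degree $\le M$ with $q(y_i)=g_i$; then ${\bf P}^{{\bf y}\mapsto{\bf x}}{\bf g}=(q(x_0),\ldots,q(x_N))^{\top}$, and applying ${\bf P}^{{\bf x}\mapsto{\bf y}}$ re-interpolates this data by the unique polynomial of degree $\le N$ through the $N+1$ points $\{(x_j,q(x_j))\}$, which, since $\deg q\le M\le N$, is $q$ itself. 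Hence ${\bf P}^{{\bf x}\mapsto{\bf y}}{\bf P}^{{\bf y}\mapsto{\bf x}}{\bf g}=(q(y_0),\ldots,q(y_M))^{\top}={\bf g}$ for every ${\bf g}$, so the product equals ${\bf I}_{M+1}$. The hypothesis $N\ge M$ is used exactly where a re-interpolant of degree $\le N$ of a polynomial of degree $\le M$ is claimed to reproduce that polynomial.

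To make this formal I would first identify the matrix entries with evaluated Lagrange cardinal polynomials. Write $L_{j,N}$ for the polynomial of degree $\le N$ with $L_{j,N}(x_n)=\delta_{jn}$; its barycentric form $L_{j,N}(y)=\bigl(w_{j,N}/(y-x_j)\bigr)\bigl(\sum_{l=0}^N w_{l,N}/(y-x_l)\bigr)^{-1}$ for $y\notin\{x_n\}_{n=0}^N$, together with $L_{j,N}(x_n)=\delta_{jn}$, matches the definition of $p^{{\bf x}\mapsto{\bf y}}_{ij}$, so $p^{{\bf x}\mapsto{\bf y}}_{ij}=L_{j,N}(y_i)$; for a row index $i$ with $y_i\in\{x_n\}_{n=0}^N$ this identification just says the row in question is the corresponding unit vector, again consistent with $L_{j,N}(x_n)=\delta_{jn}$. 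Similarly $p^{{\bf y}\mapsto{\bf x}}_{ji}=\wt L_{i,M}(x_j)$, where $\wt L_{i,M}$ are the degree-$\le M$ cardinal polynomials of the grid $\{y_n\}_{n=0}^M$. Then
\[
\bigl({\bf P}^{{\bf x}\mapsto{\bf y}}{\bf P}^{{\bf y}\mapsto{\bf x}}\bigr)_{ik}=\sum_{j=0}^N L_{j,N}(y_i)\,\wt L_{k,M}(x_j),
\]
and $\sum_{j=0}^N \wt L_{k,M}(x_j)\,L_{j,N}(x)$ is by construction the degree-$\le N$ interpolant of $x\mapsto\wt L_{k,M}(x)$ at $\{x_j\}_{j=0}^N$; as $\deg\wt L_{k,M}\le M\le N$, uniqueness of polynomial interpolation gives $\sum_{j=0}^N \wt L_{k,M}(x_j)\,L_{j,N}(x)\equiv\wt L_{k,M}(x)$. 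Evaluating at $x=y_i$ gives $\bigl({\bf P}^{{\bf x}\mapsto{\bf y}}{\bf P}^{{\bf y}\mapsto{\bf x}}\bigr)_{ik}=\wt L_{k,M}(y_i)=\delta_{ik}$, which is the assertion.

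I expect the only mildly delicate point to be this first step: verifying that the piecewise formulas for $p^{{\bf x}\mapsto{\bf y}}_{ij}$ and $p^{{\bf y}\mapsto{\bf x}}_{ji}$ genuinely coincide with the relevant Lagrange cardinal polynomial evaluated at the resampling node in all cases, in particular when the two grids happen to share a point. Once that is settled the rest is the one-line interpolation-exactness argument above; no limiting arguments, norm estimates, or special properties of the nodes are required — only that each grid consists of distinct points in $[-1,1]$ and that $N\ge M$.
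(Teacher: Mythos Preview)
Your argument is correct. The paper itself states Lemma~\ref{mnnm} without proof, so there is nothing to compare against; your interpolation-exactness argument is the natural one and would be the expected justification. The identification $p^{{\bf x}\mapsto{\bf y}}_{ij}=L_{j,N}(y_i)$ is precisely the content of the barycentric form of Lagrange interpolation, and once that is established the uniqueness of the degree-$\le N$ interpolant of a degree-$\le M$ polynomial (using $M\le N$) gives the result immediately, as you write. Your handling of the coincident-node case is also correct: when $y_i=x_{j_0}$ the paper's piecewise definition is to be read as making the entire $i$th row equal to the unit vector $e_{j_0}$, which matches $L_{j,N}(x_{j_0})=\delta_{j,j_0}$.
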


\subsection{Pseudospectral differentiation matrices}\label{sdm} 

The Lagrange interpolation basis polynomials of degree $N$ associated with the points $\{x_j\}_{j=0}^N$ are defined by \beqs\label{cf} \ell_{j,N}(x)=w_{j,N}\prod_{n=0,n\neq j}^N(x-x_n),\qquad j=0,1,\ldots,N,\eeqs where $w_{j,N}$ is the barycentric weight (\ref{bweight}). Define the pseudospectral differentiation matrices: \beqs\label{psdm} {\bf D}^{(m)}_{{\bf x}\mapsto {\bf x}}=\l[\ell_{j,N}^{(m)}(x_i)\r]_{i,j=0}^N,\qquad {\bf D}^{(m)}_{{\bf x}\mapsto {\bf y}}=\l[\ell_{j,N}^{(m)}(y_i)\r]_{i=0,j=0}^{M,N}.
\eeqs There hold $${\bf D}^{(m)}_{{\bf x}\mapsto {\bf x}}=\l({\bf D}^{(1)}_{{\bf x}\mapsto {\bf x}}\r)^m,\qquad m\geq 1,$$ and $${\bf D}^{(m)}_{{\bf x}\mapsto {\bf y}}={\bf P}^{{\bf x}\mapsto {\bf y}}{\bf D}^{(m)}_{{\bf x}\mapsto {\bf x}}.$$

The matrix ${\bf D}^{(m)}_{{\bf x}\mapsto {\bf y}}$ is called a rectangular $m$th-order differentiation matrix, which maps values of a polynomial defined on $\{x_j\}_{j=0}^N$ to the values of its $m$th-order derivative on $\{y_j\}_{j=0}^M$. Explicit formulae and recurrences for rectangular differentiation matrices are given in \cite{xu2015expli}. 

\subsection{Chebyshev polynomials and Chebyshev points}

The most widely used spectral methods for non-periodic problems are those based on Chebyshev polynomials and Chebyshev points. In this paper, we focus on these polynomials and points. However, everything we discuss can be easily generalized to the case of Jacobi polynomials and corresponding points. 

The { Chebyshev points of the first kind} (also known as { Gauss-Chebyshev points}) are given by \beqs\label{chebpts1} \nu_{j,N}=-\cos\frac{(2j+1)\pi}{2N+2},\qquad j=0,1,\ldots,N.\eeqs
In this case, the Gauss-Chebyshev quadrature weights are given by {\rm\cite{funaro1992polyn}} \beqs\label{gcw}\omega_{j,N}^\nu=\frac{\pi}{N+1},\qquad j=0,1,\ldots,N,\eeqs and the barycentric weights are given by \cite{henrici1982essen} \beqs\label{bw1}w_{j,N}^\nu=(-1)^{N-j}\frac{2^N}{N+1}\sin\frac{(2j+1)\pi}{2N+2},\qquad j=0,1,\ldots,N.\eeqs Let $\mbbp_n$ be the set of all algebraic polynomials of degree at most $n$. We have \beq\label{exact1}\int_{-1}^1 \frac{p(x)}{\sqrt{1-x^2}}\rmd x=\sum_{j=0}^N\omega_{j,N}^\nu p(\nu_{j,N}), \qquad \forall p(x)\in\mbbp_{2N+1}.\eeq 

The Chebyshev points of the second kind (also known as Gauss-Chebyshev-Lobatto points) are given by \beqs\label{chebpts2} \tau_{j,N}=-\cos\frac{j\pi}{N},\qquad j=0,1,\ldots,N.\eeqs In this case, the Gauss-Chebyshev-Lobatto quadrature weights are given by {\rm\cite{funaro1992polyn}} \beqs\label{gclw}\omega_{j,N}^\tau=\frac{\pi}{\rho_jN},\qquad j=0,\ldots,N,\eeqs and the barycentric weights are given by {\rm\cite{salzer1972lagra}} \beqs\label{bw2}w_{j,N}^\tau=(-1)^{N-j}\frac{2^{N-1}}{\rho_jN},\qquad j=0,1,\ldots,N,\eeqs where $$\rho_0=\rho_N=2, \qquad \rho_1=\rho_2=\cdots=\rho_{N-1}=1.$$ We have \beqs\label{exact2}\int_{-1}^1 \frac{p(x)}{\sqrt{1-x^2}}\rmd x=\sum_{j=0}^N\omega_{j,N}^\tau p(\tau_{j,N}), \qquad \forall p(x)\in\mbbp_{2N-1}.\eeqs 

Let $T_n(x)$ be the Chebyshev polynomials (see, for example, \cite{funaro1992polyn}) given by 
$$T_n(x)=\cos(n\arccos x),\qquad x\in[-1,1].$$ They are mutually orthogonal: \beq\label{orth}\int_{-1}^1\frac{T_k(x)T_j(x)}{\sqrt{1-x^2}}\rmd x=\frac{\varrho_k\pi}{2}\delta_{kj},\eeq where $$\varrho_k=\l\{\begin{array}{ll} 2, & k=0, \\ 1, & k\geq 1,\end{array}\r.\qquad \delta_{kj}=\l\{\begin{array}{ll} 1, & k=j, \\ 0, & k\neq j.\end{array}\r.$$ 

Let $\{\ell_{j,M}(x)\}_{j=0}^M$ denote the Lagrange interpolation basis polynomials of degree $M$ associated with the points $\{y_j\}_{j=0}^M$. The polynomial $\ell_{j,M}(x)$ can be rewritten as \beq\label{ltp}\ell_{j,M}(x)=\sum_{k=0}^M\beta_{kj} T_k(x), \qquad j=0,1,\ldots, M.\eeq 
If $\{y_j\}_{j=0}^M$ is a subset of $\{\nu_{j,N}\}_{j=0}^N$ or $\{\tau_{j,N}\}_{j=0}^N$, $\beta_{kj}$ can be obtained with ease. For example, suppose that $\{y_j\}_{j=0}^M$ is a proper subset of $\{\nu_{j,N}\}_{j=0}^N$. Let $\varpi(\cdot)$ denote the map such that $y_{j}=\nu_{\varpi(j),N}$. Let $\mcali$ denote the set such that if $i\in\mcali$ then $\nu_{i,N}\notin\{y_j\}_{j=0}^M$. By (\ref{exact1}) and (\ref{orth}), we have, for $j=0,1,\ldots,M,$ $$\beta_{kj}=\frac{2}{\varrho_k\pi}\l(T_k(y_j)\omega_{\varpi(j),N}^\nu+\sum_{i\in\mcali}T_k(\nu_{i,N})\omega_{i,N}^\nu \ell_{j,M}(\nu_{i,N})\r),\qquad k=0,1,\ldots,M.$$ Here $\ell_{j,M}(\nu_{i,N})$, $i\in\mcali$, can be obtained by solving the following linear system $$T_k(y_j)\omega_{\varpi(j),N}^\nu+\sum_{i\in\mcali}T_k(\nu_{i,N})\omega_{i,N}^\nu \ell_{j,M}(\nu_{i,N})=0, \qquad k=M+1,\ldots, N.$$ 

In particular, if $\{\ell_{j,N}^\nu(x)\}_{j=0}^N$ denote the Lagrange interpolation basis polynomials of degree $N$ associated with $\{\nu_{j,N}\}_{j=0}^N$, we have \beqs\label{lt1}\ell_{j,N}^\nu(x)=\sum_{k=0}^N\beta_{kj}^\nu T_k(x), \qquad j=0,1,\ldots, N,\eeqs where, for $j=0,1,\ldots, N,$  \beqas &&\beta_{0j}^\nu=\frac{1}{N+1}, \\ && \beta_{kj}^\nu=\frac{2 T_k(\nu_{j,N})}{N+1},\qquad k=1,2,\ldots, N-1, \\ &&\beta_{Nj}^\nu=(-1)^{N-j}\frac{2}{N+1}\sin\frac{(2j+1)\pi}{2N+2}.\eeqas  
If $\{\ell_{j,N}^\tau(x)\}_{j=0}^N$ denote the Lagrange interpolation basis polynomials of degree $N$ associated with $\{\tau_{j,N}\}_{j=0}^N$, we have \beqs\label{lt2}\ell_{j,N}^\tau(x)=\sum_{k=0}^N\beta_{kj}^\tau T_k(x), \qquad j=0,1,\ldots, N,\eeqs where, for $j=0,1,\ldots, N,$  \beqas &&\beta_{0j}^\tau=\frac{1}{\rho_j N}, \\ && \beta_{kj}^\tau=\frac{2 T_k(\tau_{j,N})}{\rho_j N},\qquad k=1,2,\ldots, N-1, \\ &&\beta_{Nj}^\tau=\frac{(-1)^{N-j}}{\rho_j N}.\eeqas 

Define the integral operators: $$\p_x^{-1} v(x)=\int_{-1}^x v(t)\rmd t; \qquad \p_x^{-k}v(x)=\p_x^{-1}\l(\p_x^{-(k-1)}v(x)\r), \quad k\geq 2.$$ By \beqs\label{recursive}T_n(x)=\frac{T_{n+1}'(x)}{2(n+1)}-\frac{T_{n-1}'(x)}{2(n-1)},\qquad n\geq 2,\eeqs and \beqs\label{lrbp}T_n(\pm1)=(\pm1)^n, \qquad T_n'(\pm1)=\pm(\pm1)^nn^2,\eeqs we have
\beqa && \p_x^{-1}T_0(x)=1+x,\nn \\ && \label{px1}\p_x^{-1}T_1(x)=\frac{x^2-1}{2}, \\ && \p_x^{-1}T_n(x)=\frac{T_{n+1}(x)}{2(n+1)}-\frac{T_{n-1}(x)}{2(n-1)}-\frac{(-1)^n}{n^2-1},\quad n\geq 2.\nn\eeqa
and
\beqa && \p_x^{-2}T_0(x)=\frac{(x+1)^2}{2},\nn \\ && \p_x^{-2}T_1(x)=\frac{(x-2)(x+1)^2}{6},\nn \\ && \p_x^{-2}T_2(x)=\frac{x(x-2)(x+1)^2}{6}\label{px2} \\ && \p_x^{-2}T_n(x)=\frac{T_{n+2}(x)}{4(n+1)(n+2)}-\frac{T_n(x)}{2(n^2-1)}+\frac{T_{n-2}(x)}{4(n-1)(n-2)}\nn \\&& \hspace{20mm} -\frac{(-1)^n(1+x)}{n^2-1}-\frac{3(-1)^n}{(n^2-1)(n^2-4)},\quad n\geq 3.\nn\eeqa

\section{Pseudospectral integration matrices} 
Given $\{y_j\}_{j=0}^{M}$ and $\{c_j\}_{j=0}^{M+m}$ with $m\geq 1$, we consider the Birkhoff-type  interpolation problem: \beqs\label{birkhoff} {\rm Find}\ p(x)\in\mbbp_{M+m}\ {\rm such \ that}\ \l\{ \begin{array}{ll} p^{(m)}(y_j)=c_j,& j=0,\ldots,M,  \\ \mcall_i\l(p,\ldots,p^{(m-1)}\r)= c_{M+i}, &   i=1,\ldots,m,\end{array}\r.\eeqs where each $\mcall_i$ is a linear functional. Let $\{\ell_{j,M}(x)\}_{j=0}^{M}$ be the Lagrange interpolation basis polynomials of degree $M$ associated with the points $\{y_j\}_{j=0}^{M}$. Then the Birkhoff-type interpolation polynomial takes the form \beq\label{form} p(x)=\sum_{j=0}^Mc_j\p_x^{-m}\ell_{j,M}(x)+\sum_{i=0}^{m-1}{\alpha_i}x^i,\eeq where $\alpha_i$ can be determined by the linear constraints $\mcall_i(p,\ldots,p^{(m-1)})= c_{M+i}.$ Obviously, the existence and uniqueness of the Birkhoff-type interpolation polynomial is equivalent to that of $\{\alpha_i\}_{i=0}^{m-1}$. After obtaining $\alpha_i$, we can rewrite (\ref{form}) as \beqs\label{bip}p(x)=\sum_{j=0}^{M+m}c_jB_j(x),\qquad B_j(x)\in\mbbp_{M+m}.\eeqs 

Let $N=M+m$ and $\{x_i\}_{i=0}^N$ be the points as in (\ref{iptsx}). Define the $m$th-order {\it pseudospectral integration matrix} (PSIM) as:
\beqs\label{psim}{\bf B}^{(-m)}_{{\bf y}\mapsto {\bf x}}=\l[B_j(x_i)\r]_{i,j=0}^N.\eeqs Define the matrices $${\bf B}^{(k-m)}_{{\bf y}\mapsto {\bf x}}=\l[B_{j}^{(k)}(x_i)\r]_{i,j=0}^N,\qquad k\geq 1.$$ It is easy to show that \beq\label{dkqm}{\bf B}^{(k-m)}_{{\bf y}\mapsto {\bf x}}={\bf D}^{(k)}_{{\bf x}\mapsto {\bf x}}{\bf B}^{(-m)}_{{\bf y}\mapsto {\bf x}},\qquad k\geq 1.\eeq

Let ${\bf L}_m$ be the discretization of the linear constraints $\mcall_i$, $1\leq i\leq m$. We have the following theorem.
 
\begin{theorem}\label{main} If for any $p(x)\in\mbbp_N$, $$\l[\begin{array}{c}\mcall_1\l(p,\ldots,p^{(m-1)}\r)\\ \vdots \\ \mcall_m\l(p,\ldots,p^{(m-1)}\r) \end{array}\r]= {\bf L}_m\l[\begin{array}{c} p(x_0) \\ \vdots \\ p(x_N)\end{array}\r],$$ then \beqs\label{identity} \l[\begin{array}{c}{\bf D}^{(m)}_{{\bf x}\mapsto {\bf y}}\\ {\bf L}_m\end{array}\r]{\bf B}^{(-m)}_{{\bf y}\mapsto {\bf x}}={\bf I}_{N+1}. \eeqs \end{theorem}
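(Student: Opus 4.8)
The plan is to verify the matrix identity column by column. The $j$th column of $\mathbf{B}^{(-m)}_{\mathbf{y}\mapsto\mathbf{x}}$ is the vector of values $[B_j(x_0),\ldots,B_j(x_N)]^{\mathrm{T}}$, so it suffices to show that for each $j=0,\ldots,N$,
\beqs
\l[\begin{array}{c}\mathbf{D}^{(m)}_{\mathbf{x}\mapsto\mathbf{y}}\\ \mathbf{L}_m\end{array}\r]\l[\begin{array}{c}B_j(x_0)\\ \vdots\\ B_j(x_N)\end{array}\r]=\mathbf{e}_{j+1},
\eeqs
the $(j+1)$st standard basis vector of $\mbbr^{N+1}$. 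Since $B_j\in\mbbp_N$, the upper block $\mathbf{D}^{(m)}_{\mathbf{x}\mapsto\mathbf{y}}[B_j(x_i)]_i$ returns exactly $[B_j^{(m)}(y_0),\ldots,B_j^{(m)}(y_M)]^{\mathrm{T}}$, because $\mathbf{D}^{(m)}_{\mathbf{x}\mapsto\mathbf{y}}$ is the exact $m$th-derivative-and-resample map on polynomials of degree at most $N$ (this is the content of the formulas in \S\ref{sdm}, together with $N=M+m$ so that $\mbbp_N$ is the exactness range). Likewise, the hypothesis on $\mathbf{L}_m$ gives that the lower block returns $[\mcall_1(B_j,\ldots,B_j^{(m-1)}),\ldots,\mcall_m(B_j,\ldots,B_j^{(m-1)})]^{\mathrm{T}}$, again because $B_j\in\mbbp_N$.

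Next I would invoke the defining property of the Birkhoff-type interpolant. By construction, $p(x)=\sum_{j=0}^{N}c_jB_j(x)$ solves \eqref{birkhoff} for arbitrary data $(c_0,\ldots,c_N)$; equivalently, each basis function $B_j$ is the solution corresponding to the data vector $c=\mathbf{e}_{j+1}$. Reading off \eqref{birkhoff} with this choice of data, $B_j$ satisfies $B_j^{(m)}(y_k)=\delta_{kj}$ for $k=0,\ldots,M$ and $\mcall_i(B_j,\ldots,B_j^{(m-1)})=\delta_{M+i,\,j}$ for $i=1,\ldots,m$. Stacking these $N+1$ scalar equations gives precisely the vector $\mathbf{e}_{j+1}$ on the right-hand side, which is what we needed. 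Letting $j$ range over $0,\ldots,N$ assembles the full identity $\bigl[\mathbf{D}^{(m)}_{\mathbf{x}\mapsto\mathbf{y}};\mathbf{L}_m\bigr]\mathbf{B}^{(-m)}_{\mathbf{y}\mapsto\mathbf{x}}=\mathbf{I}_{N+1}$.

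One point deserves care, and it is the main thing to pin down: why the basis $\{B_j\}_{j=0}^N$ exists at all and why it lies in $\mbbp_N$. This is exactly the solvability of \eqref{birkhoff}, which the excerpt reduces to the existence and uniqueness of the coefficients $\{\alpha_i\}_{i=0}^{m-1}$ in \eqref{form}; I would state this as the standing assumption under which the theorem is asserted (the particular poised constraints $\mcall_i$ used later make this concrete). Granting that, the representation $p(x)=\sum_{j=0}^{M}c_j\p_x^{-m}\ell_{j,M}(x)+\sum_{i=0}^{m-1}\alpha_i x^i$ shows $p\in\mbbp_{M+m}=\mbbp_N$ and depends linearly on $c$, so collecting the coefficient of $c_j$ defines $B_j\in\mbbp_N$ as required. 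With existence in hand, the two blocks computed above are literally the left-hand side of \eqref{birkhoff} evaluated at the data $\mathbf{e}_{j+1}$, so the identity is immediate; no further computation with the explicit antiderivative formulas \eqref{px1}--\eqref{px2} is needed for the proof itself.
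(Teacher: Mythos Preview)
Your argument is correct. It differs from the paper's in one technical choice: the paper handles the top block by first computing the \emph{square} product ${\bf D}^{(m)}_{{\bf x}\mapsto {\bf x}}{\bf B}^{(-m)}_{{\bf y}\mapsto {\bf x}}=[\,{\bf P}^{{\bf y}\mapsto {\bf x}}\ \ {\bm 0}\,]$ (using that $B_j^{(m)}=\ell_{j,M}$ for $j\le M$ and $B_j^{(m)}=0$ for $j>M$), then left-multiplies by ${\bf P}^{{\bf x}\mapsto {\bf y}}$ and invokes Lemma~\ref{mnnm} to collapse ${\bf P}^{{\bf x}\mapsto {\bf y}}{\bf P}^{{\bf y}\mapsto {\bf x}}$ to ${\bf I}_{M+1}$. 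You instead appeal directly to the Birkhoff defining conditions $B_j^{(m)}(y_k)=\delta_{kj}$ together with the exactness of ${\bf D}^{(m)}_{{\bf x}\mapsto {\bf y}}$ on $\mbbp_N$, which bypasses Lemma~\ref{mnnm} entirely. Your route is a bit more direct and conceptual; the paper's route has the advantage of exhibiting the intermediate identity ${\bf D}^{(m)}_{{\bf x}\mapsto {\bf x}}{\bf B}^{(-m)}_{{\bf y}\mapsto {\bf x}}=[\,{\bf P}^{{\bf y}\mapsto {\bf x}}\ \ {\bm 0}\,]$, which is itself of interest (cf.\ \eqref{dkqm}). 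Both treatments of the bottom block ${\bf L}_m{\bf B}^{(-m)}_{{\bf y}\mapsto {\bf x}}=[\,{\bm 0}\ \ {\bf I}_m\,]$ are identical. Your remark that well-posedness of the Birkhoff problem is a standing assumption is appropriate and matches the paper's treatment.
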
 \begin{proof} The result follows from \beqs{\bf D}^{(m)}_{{\bf x}\mapsto {\bf x}}{\bf B}^{(-m)}_{{\bf y}\mapsto {\bf x}}=\l[\begin{array}{ll} {\bf P}^{{\bf y}\mapsto {\bf x}} & {\bm 0}\end{array}\r],\qquad {\bf L}_m{\bf B}^{(-m)}_{{\bf y}\mapsto {\bf x}}=\l[\begin{array}{ll}{\bm 0}& {\bf I}_m\end{array}\r],\eeqs and Lemma \ref{mnnm}. \end{proof}
 
Now we give concrete examples. Consider the non-separable linear constraint 
 \beq\label{lc1}ap(-1)+bp(1)=\sigma,\eeq and the global linear constraint \beq\label{lc2} \int_{-1}^1p(x)\rmd x=\sigma,\eeq
 where $a$, $b$ and $\sigma$ are given constants. They are straightforward to discretize: for (\ref{lc1}), $$\l[\begin{array}{lllll}a& 0& \cdots& 0& b\end{array} \r]{\bf p}=\sigma$$ and for (\ref{lc2}), $${\bf q}^\rmt{\bf p}={\bf\sigma},$$ where $${\bf p}=\l[\begin{array}{cccc}p(x_0) & p(x_1) & \cdots & p(x_N)\end{array}\r]^\rmt$$ and ${\bf q}$ is a column vector of Clenshaw-Curtis quadrature weights \cite{funaro1992polyn}.
  
The first-order Birkhoff-type interpolation problem takes the form: \beqs\label{birkhoff1}{\rm Find} \ \ p(x)\in\mbbp_{M+1} \ \ {\rm such \ that} \ \ \l\{ \begin{array}{ll} p'(y_j)=c_j,& j=0,1,\ldots,M,  \\ \mcall p= c_{M+1}. & \end{array}\r.\eeqs  
\begin{itemize}
%\item Given $\mcall p:= ap(-1)+bp'(-1)$ with $a\neq 0$, we have \beqas && B_j(x)=\p_x^{-1}\ell_{j,M}(x)-\frac{b}{a}\ell_{j,M}(-1), \quad j=0,1,\ldots,M,\\ &&  B_{M+1}(x)=\frac{1}{a}.\eeqas 

%\item Given $\mcall p:= ap(1)+bp'(1)$ with $a\neq 0$, we have \beqas && B_j(x)=\p_x^{-1}\ell_{j,M}(x)-\int_{-1}^1\ell_{j,M}(x)\rmd x-\frac{b}{a}\ell_{j,M}(1), \quad j=0,1,\ldots,M,\\ &&  B_{M+1}(x)=\frac{1}{a}.\eeqas 

\item Given $\mcall p:= ap(-1)+bp(1)$  with $a+b\neq 0$, we have
\beqas && B_j(x)=\p_x^{-1}\ell_{j,M}(x)-\frac{b}{a+b}\int_{-1}^1\ell_{j,M}(x)\rmd x, \quad j=0,1,\ldots,M,\\ &&  B_{M+1}(x)=\frac{1}{a+b}.\eeqas 

\item Given $\dsp\mcall p:= \int_{-1}^1 p(x)\rmd x$, we have \beqas && B_j(x)=\p_x^{-1}\ell_{j,M}(x)-\frac{1}{2}\int_{-1}^1\p_x^{-1}\ell_{j,M}(x)\rmd x, \quad j=0,1,\ldots,M,\\ &&  B_{M+1}(x)=\frac{1}{2}.\eeqas 
\end{itemize}
By (\ref{ltp}) and (\ref{px1}), the matrix ${\bf B}^{(-1)}_{{\bf y}\mapsto {\bf x}}$ can be computed stably even for thousands of collocation points.

%\subsection{Second-order case ($m=2$)}
The second-order Birkhoff-type interpolation problem takes the form: \beqs\label{birkhoff2}{\rm Find} \ \ p(x)\in\mbbp_{M+2} \ \ {\rm such \ that} \ \ \l\{ \begin{array}{ll} p''(y_j)=c_j,& j=0,1,\ldots,M,  \\ \mcall_i\l(p,p'\r)=c_{M+i}, & i=1,2.\end{array}\r.\eeqs    
\begin{itemize}
%\item Given $\mcall_1(p,p'):= a_-p(-1)+b_-p'(-1),\qquad \mcall_2(p,p'):= a_+p(1)+b_+p'(1),$ with $$d:=2a_+a_--a_+b_-+a_-b_+\neq 0,$$ we have  \beqas && B_j(x)=\l(\frac{b_-}{d}-\frac{a_-}{d}(1+x)\r)\int_{-1}^1\l(a_+\p_x^{-1}\ell_{j,M}(x)+b_+\ell_{j,M}(x)\r)\rmd x\\ && \qquad\qquad +\p_x^{-2}\ell_{j,M}(x),\quad  j=0,1,\ldots,M,  \\ && B_{M+1}(x)=\frac{a_+}{d}(1-x)+\frac{b_+}{d}, \\ && B_{M+2}(x)=\frac{a_-}{d}(1+x)-\frac{b_-}{d}, \eeqas and \beqas && B_j'(x)=-\frac{a_-}{d}\int_{-1}^1\l(a_+\p_x^{-1}\ell_{j,M}(x)+b_+\ell_{j,M}(x)\r)\rmd x\\ && \qquad\qquad +\p_x^{-1}\ell_{j,M}(x),\quad  j=0,1,\ldots,M,  \\ && B_{M+1}'(x)=-\frac{a_+}{d}, \\ && B_{M+2}'(x)=\frac{a_-}{d}. \eeqas

\item Given $\mcall_1(p,p'):=ap(-1)+bp(1)$ with $a\neq b$, and $\mcall_2(p,p')=\dsp\int_{-1}^1p(x)\rmd x$, we have 
\beqas && B_j(x)=\p_x^{-2}\ell_{j,M}(x)-\frac{bx}{b-a}\int_{-1}^1\p_x^{-1}\ell_{j,M}(x)\rmd x
\\ &&\qquad\qquad +\l(\frac{(a+b)x}{2(b-a)}-\frac{1}{2}\r)\int_{-1}^1\p_x^{-2}\ell_{j,M}(x)\rmd x,\quad j=0,1,\ldots,M, \\ && B_{M+1}(x)=\frac{x}{b-a}, \\ && B_{M+2}(x)=\frac{1}{2}-\frac{(a+b)x}{2(b-a)},
\eeqas and \beqas && B_j'(x)=\p_x^{-1}\ell_{j,M}(x)-\frac{b}{b-a}\int_{-1}^1\p_x^{-1}\ell_{j,M}(x)\rmd x
\\ &&\qquad\qquad +\frac{a+b}{2(b-a)}\int_{-1}^1\p_x^{-2}\ell_{j,M}(x)\rmd x,\quad j=0,1,\ldots,M, \\ && B_{M+1}'(x)=\frac{1}{b-a}, \\ && B_{M+2}'(x)=-\frac{a+b}{2(b-a)}.
\eeqas
\end{itemize} 
By (\ref{ltp}), (\ref{px1}) and (\ref{px2}), the matrices ${\bf B}^{(-2)}_{{\bf y}\mapsto {\bf x}}$ and ${\bf B}^{(1-2)}_{{\bf y}\mapsto {\bf x}}$ can be computed stably even for thousands of collocation points.

\section{Preconditioning rectangular spectral collocation}
Consider the $m$th-order differential equations of the form \beq\label{mth} a_{m}(x)u^{(m)}(x)+\cdots+a_1(x)u'(x)+a_0(x)u(x)=f(x),
\eeq together with linear constraints \beq\label{mc} \mcall_i(u,\ldots,u^{(m-1)})=c_{M+i},\qquad i=1,2,\ldots,m.\eeq Let $\{x_j\}_{j=0}^N$ (with $N=M+m$) and $\{y_j\}_{j=0}^M$ be the points as  defined in (\ref{iptsx}) and (\ref{iptsy}), respectively.
The rectangular spectral collocation discretization \cite{driscoll2015recta} of (\ref{mth}) is given by $${\bf A}_{M+1}{\bf u}={\bf f},$$ where $${\bf A}_{M+1}=\diag\{{\bf a}_m\}{\bf D}^{(m)}_{{\bf x}\mapsto {\bf y}}+\cdots+
\diag\{{\bf a}_1\}{\bf D}^{(1)}_{{\bf x}\mapsto {\bf y}}+\diag\{{\bf a}_0\}{\bf D}^{(0)}_{{\bf x}\mapsto {\bf y}}.$$ Here we use boldface letters to indicate a column vector obtained by discretizing at the points $\{y_j\}_{j=0}^M$ except for the unknown ${\bf u}$. For example, \beqas &&{\bf a}_0=\l[\begin{array}{cccc}a_0(y_0)& a_0(y_1) & \cdots & a_0(y_M)\end{array}\r]^\rmt, \\ &&{\bf f}=\l[\begin{array}{cccc}f(y_0)& f(y_1) & \cdots & f(y_M)\end{array}\r]^\rmt.\eeqas Let $${\bf L}_m{\bf u}={\bf c}_m$$ be the discretization of the linear constraints (\ref{mc}) and satisfy the condition in Theorem \ref{main}, where $${\bf c}_m=\l[\begin{array}{cccc}c_{M+1} & c_{M+2} & \cdots & c_{M+m}\end{array}\r]^\rmt.$$ The global collocation system is given by \beq\label{gcs} {\bf Au=g}\eeq where \beqas &&{\bf A}=\l[\begin{array}{c}{\bf A}_{M+1}\\ {\bf L}_m\end{array}\r],\quad  {\bf g}=\l[\begin{array}{c} {\bf f}\\ {\bf c}_m\end{array}\r].\eeqas

Consider the pseudospectral integration matrix ${\bf B}^{(-m)}_{{\bf y}\mapsto {\bf x}}$ (\ref{psim}) as a right preconditioner for the linear system (\ref{gcs}). We need to solve the right preconditioned linear system \beqs{\bf A}{\bf B}^{(-m)}_{{\bf y}\mapsto {\bf x}}{\bf v}={\bf g}.\eeqs 
By (see Theorem \ref{main}) $${\bf L}_m{\bf B}^{(-m)}_{{\bf y}\mapsto {\bf x}}=\l[\begin{array}{ll}{\bm 0}& {\bf I}_m\end{array}\r],$$ we have %\beq\label{modal1}{\bf v}=\l[\begin{array}{c} {\bf v}_{M+1} \\ {\bf c}_m \end{array}\r]\eeq and 
\beq\label{modal2}{\bf A}_{M+1}{\bf B}^{(-m)}_{{\bf y}\mapsto {\bf x}}\l[\begin{array}{c}{\bf I}_{M+1} \\ {\bm 0}\end{array}\r]{\bf v}_{M+1}={\bf f}-{\bf A}_{M+1}{\bf B}^{(-m)}_{{\bf y}\mapsto {\bf x}}\l[\begin{array}{c}{\bm 0}\\ {\bf I}_{m} \end{array}\r]{\bf c}_m.\eeq
There hold \beqas&& {\bf A}_{M+1}{\bf B}^{(-m)}_{{\bf y}\mapsto {\bf x}}\l[\begin{array}{c}{\bf I}_{M+1} \\ {\bm 0}\end{array}\r]\\&=&\diag\{{\bf a}_m\}+\diag\{{\bf a}_{m-1}\}\wt{\bf B}^{(m-1-m)}_{{\bf y}\mapsto {\bf y}}+\cdots+\diag\{{\bf a}_0\}\wt{\bf B}^{(0-m)}_{{\bf y}\mapsto {\bf y}},\eeqas and \beqs
{\bf A}_{M+1}{\bf B}^{(-m)}_{{\bf y}\mapsto {\bf x}}\l[\begin{array}{c}{\bm 0}\\ {\bf I}_{m} \end{array}\r]= \diag\{{\bf a}_{m-1}\}\wh{\bf B}^{(m-1-m)}_{{\bf y}\mapsto {\bf y}}+\cdots+\diag\{{\bf a}_0\}\wh{\bf B}^{(0-m)}_{{\bf y}\mapsto {\bf y}},\eeqs where, for $k=0,1,\cdots,m-1$, $$\wt{\bf B}^{(k-m)}_{{\bf y}\mapsto {\bf y}}=\l[B_j^{(k)}(y_i)\r]_{i,j=0}^M,\qquad \wh{\bf B}^{(k-m)}_{{\bf y}\mapsto {\bf y}}=\l[B_j^{(k)}(y_i)\r]_{i=0,j=M+1}^{M,M+m}.$$
After solving (\ref{modal2}), we obtain ${\bf u}$ by $${\bf u}={\bf B}^{(-m)}_{{\bf y}\mapsto {\bf x}}\l[\begin{array}{c} {\bf v}_{M+1} \\ {\bf c}_m \end{array}\r].$$

\section{Numerical results}

In this section, we compare the rectangular spectral collocation (RSC) scheme (\ref{gcs}) and the preconditioned rectangular spectral collocation (P-RSC) scheme (\ref{modal2}). %  defined by $${\rm Max\ Error}=\max_{n}|u(x_n)-\tilde u_{n}|,$$ where $\tilde u_{n}$ is the computed solution obtained by using the backslash solver ``$\backslash$'' in MATLAB.
%We report the condition numbers of the linear systems, the maximum point-wise errors of the computed solutions, and the number of iterations for solving the linear systems via the GMRES algorithm \cite{saad1986gmres} with the relative tolerance equal to $10^{-10}$ and the restart number equal to $40$.
In all computations, the Chebyshev points of the second kind are chosen as $\{x_j\}_{j=0}^N$ and the Chebyshev points of the first kind are chosen as $\{y_j\}_{j=0}^M$. 

\subsection*{Example 1} 
We consider the equation \beq\label{example1} u'(x)+a_0(x)u(x)=f(x)\eeq with the linear constraint
 \beq\label{ulc1}u(-1)+u(1)=\sigma,\eeq or the linear constraint \beq\label{ulc2} \int_{-1}^1u(x)\rmd x=\sigma,\eeq
 where $\sigma$ is a given constant. We report in Table \ref{e1t1} the condition numbers of the linear systems in RSC and P-RSC with $a_0(x)=2x, -\sin x$, and various $N$. We observe that the condition numbers of P-RSC are independent of $N$, while those of RSC behave like $\mcalo(N^{2.5})$.

\begin{table}[htp]
\caption{Comparison of condition numbers.}
\label{e1t1}
\begin{center} \footnotesize
\begin{tabular}{c|c|c|c|c|c|c|c|c} \toprule
&\multicolumn{4}{c|}{$a_0(x)=2x$}& \multicolumn{4}{c}{$a_0(x)=-\sin x$}\\ \hline
&\multicolumn{2}{c|}{Constraint (\ref{ulc1})}& \multicolumn{2}{c|}{Constraint (\ref{ulc2})}&\multicolumn{2}{c|}{Constraint (\ref{ulc1})}& \multicolumn{2}{c}{Constraint (\ref{ulc2})}\\ \hline
 $N$& RSC& P-RSC& RSC & P-RSC& RSC& P-RSC& RSC & P-RSC\\ \hline
 128& 6.86e+04 & 3.19 &  3.37e+04 & 2.54 & 3.04e+04 & 1.95 & 4.07e+04 & 1.95\\
 256& 3.87e+05 & 3.19 &  1.91e+05 & 2.54 & 1.72e+05 & 1.95 & 2.29e+05 & 1.95\\
 512& 2.19e+06 & 3.19 &  1.08e+06 & 2.54 & 9.68e+05 & 1.95 & 1.30e+06 & 1.95\\
1024& 1.24e+07 & 3.19 &  6.10e+06 & 2.54 & 5.47e+06 & 1.95 & 7.32e+06 & 1.95\\ \bottomrule
\end{tabular}
\end{center}
\end{table}

\begin{figure}[!htpb]
\centerline{\epsfig{figure=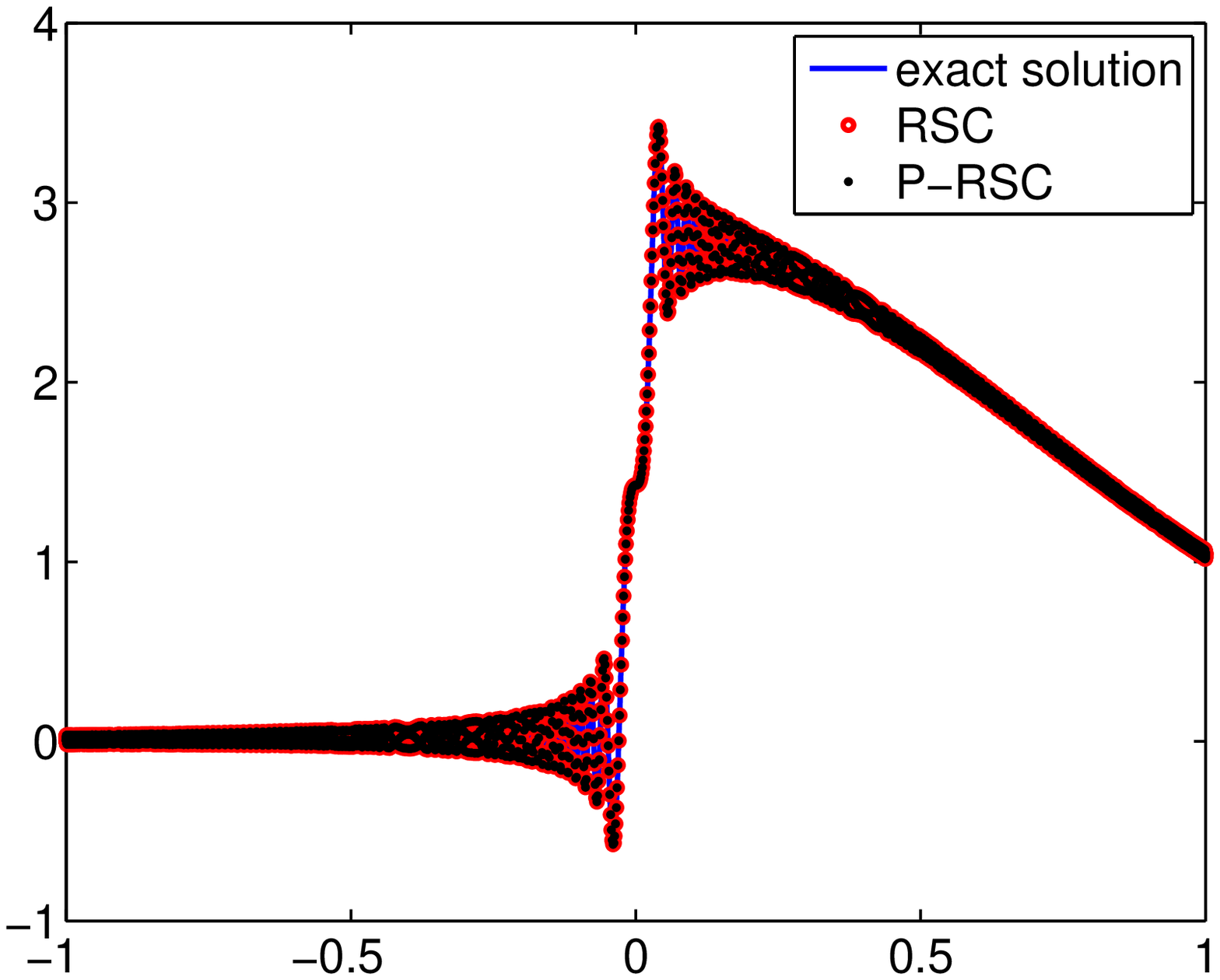,height=2.0in}\epsfig{figure=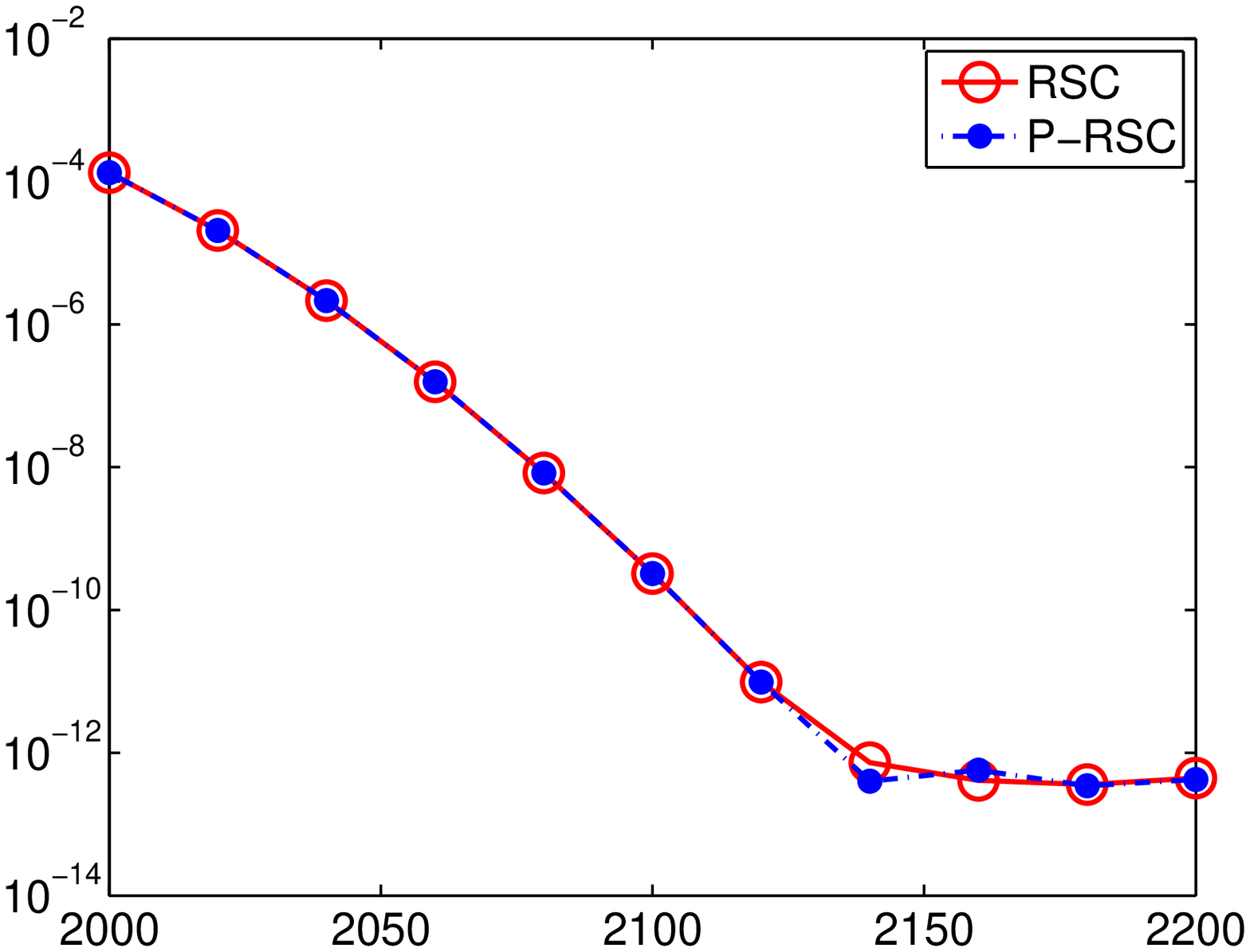,height=2.0in}}
\centerline{\footnotesize\hspace{3mm}(a)\hspace{6.3cm} (b)}
\caption{{\rm (a):} exact solution versus numerical solutions. {\rm (b):} comparison of numerical errors}\label{e1f1}
\end{figure} 

We next consider (\ref{example1}) with $a_0(x)=2x$ and the linear constraint (\ref{ulc1}). The function $f(x)$ and $\sigma$ are chosen such that an oscillatory solution  of (\ref {example1}) is $$ u(x)=100\exp(-x^2)\int_{-1}^x\exp(t^2)\sin(2000t^2)\rmd t.$$ In Figure \ref{e1f1} (a) we plot the exact solution against the numerical solutions obtained by RSC and P-RSC with $N=2200$. In Figure \ref{e1f1} (b) we plot the maximum point-wise errors of RSC and P-RSC. It indicates that for this example, even for very large $N$, both RSC and P-RSC are very stable.

\subsection*{Example 2} 
We consider the equation \beq\label{example2} \ve u''(x)-xu'(x)-u(x)=f(x)\eeq with the linear constraints  \beqs u(-1)-u(1)=\sigma_1,\qquad \int_{-1}^1u(x)\rmd x=\sigma_2.\eeqs The function $f(x)$, $\sigma_1$ and $\sigma_2$ are chosen such that the exact solution of (\ref {example2}) is $$ u(x)=\exp\l(\frac{x^2-1}{2\ve}\r).$$ 
 
\begin{table}[htp]
\caption{Comparison of condition numbers and iterations of different schemes for $\ve=1$.}
\label{e2t1}
\begin{center} \footnotesize
\begin{tabular}{c|c|c|c|c|c|c} \toprule
&\multicolumn{3}{c|}{RSC}& \multicolumn{3}{c}{P-RSC}\\ \hline
 $N$& Condition& Error& Iterations & Condition& Error & Iterations\\ \hline
 128& 1.95e+08 & 8.41e-10 &$>$1000 & 2.73 & 6.66e-16 & 8\\
 256& 4.39e+09 & 9.23e-09 &$>$1000 & 2.73 & 6.66e-16 & 8\\
 512& 9.94e+10 & 7.84e-08 &$>$1000 & 2.73 & 8.88e-16 & 8\\
1024& 2.25e+12 & 2.49e-06 &$>$1000 & 2.73 & 1.11e-15 & 8\\ \bottomrule
\end{tabular}
\end{center}
\end{table} 
 
In Tables \ref{e2t1}-\ref{e2t3}, we present the condition numbers, the maximum point-wise errors, and the number of iterations via the GMRES algorithm \cite{saad1986gmres} with the relative tolerance equal to $10^{-10}$ and the restart number equal to $40$, for the cases $\ve=1$, $\ve=0.1$, and $\ve=0.01$, respectively. 
We observe that the condition numbers of P-RSC are independent of $N$, while those of RSC behave like $\mcalo(N^{4.5})$.

\begin{table}[htp]
\caption{Comparison of condition numbers and iterations of different schemes for $\ve=0.1$.}
\label{e2t2}
\begin{center} \footnotesize
\begin{tabular}{c|c|c|c|c|c|c} \toprule
&\multicolumn{3}{c|}{RSC}& \multicolumn{3}{c}{P-RSC}\\ \hline
 $N$& Condition& Error& Iterations & Condition& Error & Iterations\\ \hline
 128& 6.74e+07 & 2.65e-10 &$>$1000 & 5.11e+02 & 1.14e-14 & 16\\
 256& 1.50e+09 & 5.95e-10 &$>$1000 & 5.11e+02 & 1.62e-14 & 16\\
 512& 3.35e+10 & 4.12e-09 &$>$1000 & 5.11e+02 & 1.58e-14 & 16\\
1024& 7.55e+11 & 1.69e-07 &$>$1000 & 5.11e+02 & 1.49e-14 & 16\\ \bottomrule
\end{tabular}
\end{center}
\end{table}

\begin{table}[htp]
\caption{Comparison of condition numbers and iterations of different schemes for $\ve=0.01$.}
\label{e2t3}
\begin{center} \footnotesize
\begin{tabular}{c|c|c|c|c|c|c} \toprule
&\multicolumn{3}{c|}{RSC}& \multicolumn{3}{c}{P-RSC}\\ \hline
 $N$& Condition& Error& Iterations & Condition& Error & Iterations\\ \hline
 128& 4.47e+07 & 2.23e-10 &$>$1000 & 3.70e+05 & 3.11e-13 & 64\\
 256& 9.77e+08 & 2.20e-09 &$>$1000 & 3.70e+05 & 1.04e-12 & 65\\
 512& 2.16e+10 & 7.95e-09 &$>$1000 & 3.70e+05 & 1.34e-12 & 67\\
1024& 4.84e+11 & 4.69e-07 &$>$1000 & 3.70e+05 & 5.35e-13 & 67\\ \bottomrule
\end{tabular}
\end{center}
\end{table}

\section{Concluding remarks} We have proposed a preconditioning rectangular spectral collocation scheme for $m$th-order ordinary differential equations together with $m$ general linear constraints. The condition number of the resulting linear system is typically independent of the number of collocation points. And the linear system can be solved by an iterative solver within a few iterations. The application of the preconditioning scheme to nonlinear problems is straightforward. %Extension of the preconditioning rectangular spectral collocation method to fractional spectral collocation methods \cite{zayernouri2014fract,zayernouri2015fract} for fractional differential equations is easy. 
%\cite{sloan2004norms}

%\begin{table}[htp]
%\caption{$\|\wt{\bf B}_{{\bf y}\mapsto {\bf y}}^{(0-1)}\|_2$ in Example {\rm 1}.}
%\label{e1t1}
%\begin{center} \footnotesize
%\begin{tabular}{c|c|c|c} \toprule
% $N$& (\ref{lc1})  with $a=1,b=0$ &  (\ref{lc1})  with $a=1,b=1$  & (\ref{lc2})\\ \hline
%  8& 1.4416& 0.8717 & 0.8717 \\
% 16& 1.4421& 0.8717 & 0.8717 \\
% 32& 1.4422& 0.8717 & 0.8717 \\
% 64& 1.4422& 0.8717 & 0.8717 \\
%128& 1.4422& 0.8717 & 0.8717 \\
%256& 1.4422& 0.8717 & 0.8717 \\ \bottomrule
%\end{tabular}
%\end{center}
%\end{table} 
%
%\begin{table}[htp]
%\caption{$\|\wt{\bf B}_{{\bf y}\mapsto {\bf y}}^{(0-2)}\|_2$ and $\|\wt{\bf B}_{{\bf y}\mapsto {\bf y}}^{(1-2)}\|_2$ in Example {\rm 2}.}
%\label{e2t1}
%\begin{center} \footnotesize
%\begin{tabular}{c|c|c|c|c} \toprule
%&\multicolumn{2}{c|}{$a=1,b=0$}& \multicolumn{2}{c}{ $a=1,b=-1$}\\ \hline
% $N$& $\|\wt{\bf B}_{{\bf y}\mapsto {\bf y}}^{(0-2)}\|_2$ &$\|\wt{\bf B}_{{\bf y}\mapsto {\bf y}}^{(1-2)}\|_2$& $\|\wt{\bf B}_{{\bf y}\mapsto {\bf y}}^{(0-2)}\|_2$ &$\|\wt{\bf B}_{{\bf y}\mapsto {\bf y}}^{(1-2)}\|_2$\\ \hline
%  8& 0.4156& 0.9784 & 0.2725& 0.8717\\
% 16& 0.4156& 0.9784 & 0.2725& 0.8717\\
% 32& 0.4156& 0.9784 & 0.2725& 0.8717\\
% 64& 0.4156& 0.9784 & 0.2725& 0.8717\\
%128& 0.4156& 0.9784 & 0.2725& 0.8717\\
%256& 0.4156& 0.9784 & 0.2725& 0.8717\\ \bottomrule
%\end{tabular}
%\end{center}
%\end{table} 

%\section{Concluding remarks} 
\section*{Acknowledgment} We thank Prof. Li-Lian Wang (Nanyang
Technological University, Singapore) for providing the MATLAB codes used in \cite{wang2014well}.

%\bibliographystyle{siam}
%\bibliography{/users/dukui/mywork/common/macbib}

\begin{thebibliography}{10}

\bibitem{boyd2001cheby}
{\sc J.~P. Boyd}, {\em Chebyshev and {F}ourier spectral methods}, Dover
  Publications, Inc., Mineola, NY, second~ed., 2001.

\bibitem{canuto2006spect}
{\sc C.~Canuto, M.~Y. Hussaini, A.~Quarteroni, and T.~A. Zang}, {\em Spectral
  methods: Fundamentals in single domains}, Scientific Computation,
  Springer-Verlag, Berlin, 2006.

\bibitem{driscoll2015recta}
{\sc T.~A. Driscoll and N.~Hale}, {\em Rectangular spectral collocation}, IMA
  Journal of Numerical Analysis, to appear (2015).

\bibitem{fornberg1996pract}
{\sc B.~Fornberg}, {\em A practical guide to pseudospectral methods}, vol.~1 of
  Cambridge Monographs on Applied and Computational Mathematics, Cambridge
  University Press, Cambridge, 1996.

\bibitem{funaro1992polyn}
{\sc D.~Funaro}, {\em Polynomial approximation of differential equations},
  vol.~8 of Lecture Notes in Physics. New Series m: Monographs,
  Springer-Verlag, Berlin, 1992.

\bibitem{henrici1982essen}
{\sc P.~Henrici}, {\em Essentials of numerical analysis with pocket calculator
  demonstrations}, John Wiley \& Sons, Inc., New York, 1982.

\bibitem{saad1986gmres}
{\sc Y.~Saad and M.~H. Schultz}, {\em G{MRES}: a generalized minimal residual
  algorithm for solving nonsymmetric linear systems}, SIAM J. Sci. Statist.
  Comput., 7 (1986), pp.~856--869.

\bibitem{salzer1972lagra}
{\sc H.~E. Salzer}, {\em Lagrangian interpolation at the {C}hebyshev points
  {$X_{n,\nu }\equiv {\rm cos}(\nu \pi /n)$}, {$\nu =0(1)n$}; some unnoted
  advantages}, Comput. J., 15 (1972), pp.~156--159.

\bibitem{shen2011spect}
{\sc J.~Shen, T.~Tang, and L.-L. Wang}, {\em Spectral methods}, vol.~41 of
  Springer Series in Computational Mathematics, Springer, Heidelberg, 2011.
\newblock Algorithms, analysis and applications.

\bibitem{shi2003theor}
{\sc Y.~G. Shi}, {\em Theory of {B}irkhoff interpolation}, Nova Science
  Publishers, Inc., Hauppauge, NY, 2003.

\bibitem{trefethen2000spect}
{\sc L.~N. Trefethen}, {\em Spectral methods in {MATLAB}}, vol.~10 of Software,
  Environments, and Tools, Society for Industrial and Applied Mathematics
  (SIAM), Philadelphia, PA, 2000.

\bibitem{wang2014well}
{\sc L.-L. Wang, M.~D. Samson, and X.~Zhao}, {\em A well-conditioned
  collocation method using a pseudospectral integration matrix}, SIAM J. Sci.
  Comput., 36 (2014), pp.~A907--A929.

\bibitem{xu2015expli}
{\sc K.~Xu and N.~Hale}, {\em Explicit construction of rectangular
  differentiation matrices}, IMA Journal of Numerical Analysis, to appear
  (2015).

\end{thebibliography}

\end{document}